\setlist[enumerate]{label=(\alph*),font=\normalfont}
\newtheorem{thm}{Theorem}[section]
\newtheorem*{thm*}{Theorem}
\newtheorem{ex}[thm]{Example}
\newtheorem*{ex*}{Example}
\newtheorem{cor}[thm]{Corollary}
\newtheorem*{cor*}{Corollary}
\theoremstyle{definition}
\theoremstyle{remark}
\newcommand{\N}{\mathrm{N}}
\DeclareMathOperator{\Gal}{Gal}
\DeclareMathOperator{\Li}{Li}
\title{Significant Digits of Primes in Subsets}
\keywords{}
\author{Henry Glunz}
\begin{document}
\keywords{Logarithmic Density, Benford}
\subjclass[2020]{11R44, 11R45}
\maketitle
\begin{abstract}

Benford's Law describes the prevalence of small numbers as the leading digits of numbers in many sets of integers. We prove a variant of Benford's law for many positive-density subsets of the primes.  This follows from a more general result over number fields.
\end{abstract}

\section{Introduction and Statement of Results}
In 1881, astronomer Simon Newcomb noticed that the pages in his table of logarithms beginning with the digit $1$ were more worn than others. \cite{N} This phenomenon is surprising compared to the naive assumption that wear should be equidistributed across digits. Frank Benford formalized Newcomb's observation in the 1930s as Benford's Law, which states that entries with leading digit $d\in \{ 1, \cdots , 9 \}$ should appear with probability $\log_{10}(1+d^{-1})$. Some examples of sets consistent with this leading digit pattern include the population statistics of countries, the molecular weights of chemical compounds, election data, and pandemic infection rates. \cite{B}
The set of integers does not conform to Benford's Law, as
\[
\liminf_{x\rightarrow\infty} \frac{1}{x} \sum_{\substack{n\leq x, \\ n\text{ begins} \\ \text{with digit }1}} 1 = \frac{1}{9}
\qquad\textup{and}\qquad 
\limsup_{x\rightarrow\infty} \frac{1}{x} \sum_{\substack{n\leq x, \\ n\text{ begins} \\ \text{with digit }1}} 1 = \frac{5}{9}.
\]
Despite this, the set of integers does conform to Benford's Law logarithmically, namely
$$\lim_{x \rightarrow \infty} \frac{1}{\log x}  \sum_{\substack{n\leq x, \\ n\text{ begins} \\ \text{with digit }d}} \frac{1}{n} = \log_{10}(1+d^{-1}).$$
Similarly, the set of primes does not conform to this law.  In particular, if 
$$\pi_{10,d}(x) \colonequals \# \{ \text{prime }p \leq x : p\text{'s base-10 expansion begins with the digit $d$}\}$$
and $\pi(x)=\#\{p\leq x \colon \textup{$p$ prime}\}$, then
\[
\liminf_{x\rightarrow\infty}\frac{\pi_{10,1}(x)}{\pi(x)}= \frac{1}{5} 
\qquad\textup{and}\qquad 
\limsup_{x\rightarrow\infty} \frac{\pi_{10,1}(x)}{\pi(x)}= \frac{1}{2}.
\]
Nevertheless, Whitney \cite{W} proved that
\[
\lim_{x\to\infty}\frac{1}{\sum_{p\leq x}\frac{1}{p}}\sum_{\substack{\textup{$p\leq x$} \\ \textup{$p$ begins} \\ \textup{with digit $d$}}}\frac{1}{p}=\log_{10}(1+d^{-1}).
\]

We prove a generalization of Whitney's result for subsets of the primes; this result follows from a general theorem that holds over number fields.  Let $K$ be an number field, and let $\N=\N_{K/\mathbb{Q}}$ be the absolute norm of $K$ over $\mathbb{Q}$. Let $\mathcal{P}$ be the set of prime ideals in $K$, and let $\mathcal{A}$ be a subset of $\mathcal{P}$.  
We now define logarithmic density $d(\mathcal{A})$ of $\mathcal{A}$ to be the common value of
\[
\overline{d(\mathcal{A})} = \limsup_{x \rightarrow \infty} \frac{1}{\log\log x} \sum_{\substack{\mathfrak{p}\in \mathcal{A} \\ \N\mathfrak{p} \leq x}} \frac{1}{\N\mathfrak{p}}\qquad\textup{and}\qquad \underline{d(\mathcal{A})} = \liminf_{x \rightarrow \infty} \frac{1}{\log\log x} \sum_{\substack{\mathfrak{p}\in \mathcal{A} \\ \N\mathfrak{p} \leq x}} \frac{1}{\N\mathfrak{p}}
\]
if they equal each other.  Note that by the prime ideal theorem, we have that
\[
\sum_{\mathrm{N}\mathfrak{p}\leq x}\frac{1}{\N\mathfrak{p}}\sim \log\log x.
\]

\begin{thm}
\label{thm:1}
Let $K$ be a number field, let $\mathcal{P}$ be the set of prime ideals of $K$, and let $\mathcal{A} \subseteq \mathcal{P}$ be a subset such that there exist constants $\delta\geq 0$ and $C\in\mathbb{R}$ (both depending at most on $\mathcal{A}$) such that 
\begin{equation}
\label{1.1}
    \sum_{\substack{\mathfrak{p} \in \mathcal{A} \\ \N\mathfrak{p} \leq x}} \frac{1}{\N\mathfrak{p}} = \delta \log\log x+C+o\Big(\frac{1}{\log x} \Big).
\end{equation}
Let $b\geq 2$ be an integer, and let $\mathcal{A}_{b,S}$ be the subset of $\mathcal{A}$ consisting of prime ideals whose norms have a base-$b$ expansion beginning with a string $S$ of $m$  base-$b$ digits $a_1,\ldots,a_m$ with $a_1\neq 0$. Then we have that $$d(\mathcal{A}_{b,S})=\delta \log_b(1+S^{-1}).$$
\end{thm}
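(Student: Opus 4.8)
The plan is to reduce the statement to a purely analytic computation driven entirely by the hypothesis \eqref{1.1}; the arithmetic of $K$ and the primality of the $\mathfrak p$ play no further role once \eqref{1.1} is granted. First I would record the digit characterization: viewing $S=a_1\cdots a_m=\sum_{i=1}^m a_ib^{m-i}$ as an integer with $b^{m-1}\le S<b^m$, a positive integer $n$ has base-$b$ expansion beginning with the string $S$ if and only if $Sb^k\le n<(S+1)b^k$ for some integer $k\ge 0$. Since $a_1\ne0$ forces $S\ge1$, the intervals $I_k\colonequals[Sb^k,(S+1)b^k)$ are pairwise disjoint (indeed $(S+1)b^k\le Sb^{k+1}$ because $S(b-1)\ge1$), so $\mathcal A_{b,S}$ is exactly the set of $\mathfrak p\in\mathcal A$ with $\N\mathfrak p\in\bigcup_{k\ge0}I_k$. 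Writing $F(y)=\sum_{\mathfrak p\in\mathcal A,\ \N\mathfrak p\le y}(\N\mathfrak p)^{-1}$ and letting $K$ be the largest $k$ with $(S+1)b^k\le x$, this lets me decompose
\[
\sum_{\substack{\mathfrak p\in\mathcal A_{b,S}\\ \N\mathfrak p\le x}}\frac{1}{\N\mathfrak p}=\sum_{k=0}^{K}\bigl(F((S+1)b^k)-F(Sb^k)\bigr)+o(1),
\]
the $o(1)$ absorbing the at most one block straddling $x$ (whose total mass is itself $o(1)$) and the harmless $\pm1$ shifts at interval endpoints.

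Next I would evaluate each full block via \eqref{1.1}. For $k\ge1$,
\[
F((S+1)b^k)-F(Sb^k)=\delta\bigl(\log\log((S+1)b^k)-\log\log(Sb^k)\bigr)+E_k,
\]
where $E_k$ is the difference of the two $o(1/\log y)$ error terms. Since $\log((S+1)b^k)=\log(S+1)+k\log b$ and $\log(Sb^k)=\log S+k\log b$, a Taylor expansion of $\log(1+c/u)$ with $c=\log\frac{S+1}{S}=\log(1+S^{-1})$ and $u=k\log b+\log S\ge\log b>0$ gives, uniformly for $k\ge1$,
\[
\log\log((S+1)b^k)-\log\log(Sb^k)=\frac{\log(1+S^{-1})}{k\log b}+O\Bigl(\frac{1}{k^2}\Bigr).
\]
The single $k=0$ term is a bounded constant and is absorbed into the error budget.

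Summing the main term over $1\le k\le K$ with $K\sim\log_b x$, the $O(1/k^2)$ contributions converge to $O(1)$, while
\[
\sum_{1\le k\le K}\frac{\delta\log(1+S^{-1})}{k\log b}=\frac{\delta\log(1+S^{-1})}{\log b}\bigl(\log K+O(1)\bigr)=\delta\log_b(1+S^{-1})\,\log\log x+O(1),
\]
because $\log K=\log\log_b x=\log\log x+O(1)$. Dividing by $\log\log x$ and sending $x\to\infty$ would then give the limit $\delta\log_b(1+S^{-1})$, identifying $\overline{d(\mathcal A_{b,S})}=\underline{d(\mathcal A_{b,S})}=\delta\log_b(1+S^{-1})$, \emph{provided} the accumulated errors $\sum_{k\le K}E_k$ are $o(\log\log x)$. (The case $\delta=0$ is immediate: then $F$ is bounded, hence so is the subsum over $\mathcal A_{b,S}$.)

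Controlling $\sum_k E_k$ is the crux of the argument and the step I expect to be the main obstacle, since there are $\asymp\log_b x$ blocks and only a logarithmic normalization to spare. Here the precise strength of \eqref{1.1} is essential: because its error is $o(1/\log y)$ rather than merely $o(1)$, writing it as $\eta(y)/\log y$ with $\eta(y)\to0$ and setting $\eta^*(t)=\sup_{y\ge t}|\eta(y)|\downarrow0$, one gets $|E_k|\le 2\eta^*(b^k)/(k\log b)$ (using $\log(Sb^k)\ge k\log b$). A Cesàro/Kronecker estimate finishes the job: splitting at a threshold $k_0$ beyond which $\eta^*(b^k)<\varepsilon$ yields
\[
\sum_{k\le K}|E_k|\le C_{k_0}+\frac{2\varepsilon}{\log b}\sum_{k_0<k\le K}\frac1k\le C_{k_0}+\frac{2\varepsilon}{\log b}\log K,
\]
so that $\limsup_{x\to\infty}\frac{1}{\log\log x}\sum_{k\le K}|E_k|\le \frac{2\varepsilon}{\log b}$ for every $\varepsilon>0$, and hence the errors contribute nothing in the limit. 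Combining this with the main-term computation completes the proof.
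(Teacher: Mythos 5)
Your proof is correct, and it follows the same basic skeleton as the paper's: decompose $\mathcal A_{b,S}$ into the digit blocks $[Sb^k,(S+1)b^k)$, apply \eqref{1.1} to each block, sum over $k\lesssim\log_b x$, and normalize by $\log\log x$. Where you diverge is in the evaluation of the resulting main term $\sum_{k}\log\bigl(\log((S+1)b^k)/\log(Sb^k)\bigr)$: the paper rewrites this as $\log\prod_{t}\frac{\log_b(S+1)+t}{\log_b S+t}$ and invokes Euler's product identity for $\Gamma$ to get $\frac{\Gamma(\log_b S)}{\Gamma(\log_b(S+1))}n^{\log_b(1+S^{-1})}$, whereas you Taylor-expand each term as $\frac{\log(1+S^{-1})}{k\log b}+O(k^{-2})$ and sum the harmonic series. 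Your route is more elementary, and it has two concrete advantages. First, it sidesteps the degenerate case $S=1$ (e.g.\ $b=2$, $S=$ the single digit $1$), where $\log_b S=0$ and the paper's $\Gamma(\log_b S)$ and the $t=0$ factor of its product are undefined; you handle $k=0$ separately as a bounded constant. Second, your treatment of the accumulated errors is genuinely more careful: the paper passes from $n$ terms each of size $o(1/t)$ to $o(\sum_{t\le n}1/t)=o(\log n)$ without comment, which implicitly requires exactly the uniformity/monotone-majorant argument you spell out with $\eta^*$; making that Ces\`aro--Kronecker step explicit is where the full strength of the $o(1/\log x)$ hypothesis in \eqref{1.1} is used, and you correctly identify it as the crux. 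You also handle the passage from $x$ of the special form $(S+1)b^n$ to general $x\to\infty$ by absorbing the straddling block into $o(1)$, which is cleaner than the paper's sandwiching of $\underline{d}$ and $\overline{d}$ between shifted block counts. (Minor notational point: you reuse $K$ for both the number field and the block-count cutoff; rename one.)
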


In particular, Theorem \ref{thm:1} can be used to study the significant digits of norms of prime ideals occurring in the Chebotarev density theorem.

\begin{cor}
\label{cor:1}
Let $K$ be a number field and $L$ a
normal extension of $K$ with Galois group $G = \Gal (L/K)$. Let $\mathcal{A}$ be the set of unramified primes $\mathfrak{p}$ in $K$ with Artin symbol $\Big( \frac{L/K}{\mathfrak{p}} \Big)=C$. Then we have
$$d(\mathcal{A}_{b,S}) = \frac{|C|}{|G|} \log_{b} (1+S^{-1}).$$
\end{cor}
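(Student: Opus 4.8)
The plan is to verify that the set $\mathcal{A}$ of unramified primes with Artin symbol $C$ satisfies the hypothesis \eqref{1.1} of \Cref{thm:1} with $\delta = |C|/|G|$, after which the theorem applies verbatim. The analytic input is an effective form of the Chebotarev density theorem: writing $\pi_C(x) = \#\{\mathfrak{p}\in\mathcal{A} : \N\mathfrak{p}\leq x\}$, the theorem of Lagarias--Odlyzko supplies, for the fixed extension $L/K$, a constant $c>0$ with
\[
\pi_C(x) = \frac{|C|}{|G|}\Li(x) + E(x), \qquad E(x) = O\!\left(x\exp(-c\sqrt{\log x})\right).
\]
Here the prime ideals of residue degree $\geq 2$ number only $O(\sqrt{x})$ and contribute a convergent reciprocal sum, so they are absorbed into $E(x)$ and affect only the eventual constant. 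The feature I would exploit is that $E(x)/x$ decays faster than any fixed power of $1/\log x$, since $c\sqrt{\log x}$ dominates $A\log\log x$ for every $A>0$; in particular $E(x)/x = o(1/\log x)$, so no quasi-Riemann-hypothesis input is needed.

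Next I would pass from the counting function to the reciprocal sum by Abel summation. Choosing $a$ just below the least norm occurring in $\mathcal{A}$ so that $\pi_C(a)=0$, one has
\[
\sum_{\substack{\mathfrak{p}\in\mathcal{A}\\ \N\mathfrak{p}\leq x}}\frac{1}{\N\mathfrak{p}} = \frac{\pi_C(x)}{x} + \int_a^x \frac{\pi_C(t)}{t^2}\,dt.
\]
Substituting the Chebotarev asymptotic and integrating the main term by parts (with $u=\Li(t)$, $dv = t^{-2}\,dt$) gives $\int_a^x \Li(t)t^{-2}\,dt = \log\log x - \Li(x)/x + \mathrm{const}$, so the two occurrences of $\Li(x)/x$ cancel and the main term collapses to $\frac{|C|}{|G|}(\log\log x + \mathrm{const})$. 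It then remains to bound the error contributions $E(x)/x$ and $\int_a^x E(t)t^{-2}\,dt$: the first is $o(1/\log x)$ as noted above, and for the second the substitution $u=\log t$ converts the integral into $\int^{\log x}\exp(-c\sqrt u)\,du$, whose tail is $O\!\big(\sqrt{\log x}\,\exp(-c\sqrt{\log x})\big)=o(1/\log x)$. Collecting the convergent pieces into a single constant $C'$ yields
\[
\sum_{\substack{\mathfrak{p}\in\mathcal{A}\\ \N\mathfrak{p}\leq x}}\frac{1}{\N\mathfrak{p}} = \frac{|C|}{|G|}\log\log x + C' + o\!\left(\frac{1}{\log x}\right),
\]
which is precisely \eqref{1.1} with $\delta = |C|/|G|$.

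With this hypothesis established, \Cref{thm:1} applies directly and delivers $d(\mathcal{A}_{b,S}) = \frac{|C|}{|G|}\log_b(1+S^{-1})$. The main obstacle is purely analytic: securing a Chebotarev error term strong enough to survive the passage through Abel summation and still beat $1/\log x$. The comparison $\exp(-c\sqrt{\log x}) = o(1/\log x)$ is exactly what makes the classical Siegel--Walfisz-type error term sufficient, and I would take care to confirm that the residue-degree $\geq 2$ primes and the lower endpoint of integration perturb only the constant $C'$ and not the shape of the error.
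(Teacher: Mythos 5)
Your proposal is correct and follows essentially the same route as the paper: invoke the Lagarias--Odlyzko effective Chebotarev density theorem, pass to the reciprocal sum by Abel summation to verify hypothesis \eqref{1.1} with $\delta=|C|/|G|$, and apply \cref{thm:1}. You simply supply more detail (the explicit error-term comparison and the treatment of higher-residue-degree primes) than the paper, which states the Abel summation step without computation.
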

\begin{proof}
By the effective form of the Chebotarev density theorem due to Lagarias and Odlyzko \cite{C} we know that
\[
\pi_C(x,L/K) = \frac{|C|}{|G|} \Li(x) + o\Big( \frac{x}{(\log x)^2} \Big).
\]
We find via Abel summation that there exist a constant $c_{L/K}\in\mathbb{R}$, depending at most on $L/K$, such that
\[
\sum_{\substack{\mathfrak{p} \in \mathcal{A} \\ \N\mathfrak{p} \leq x}} \frac{1}{\N\mathfrak{p}} = \frac{|C|}{|G|} \log\log x+c_{L/K}+o\Big(\frac{1}{\log x} \Big).
\]
Therefore, $\mathcal{A}$ satisfies (1) and by Theorem 1.1 we obtain
$$ d(\mathcal{A}_{b,S})=\frac{|C|}{|G|}\log_{b}(1+S^{-1}). $$
\end{proof}

We now state some applications of Corollary \ref{cor:1}.

\begin{ex}
Let $a$ and $q$ be coprime integers, $K=\mathbb{Q}$, and $L=\mathbb{Q}(e^{2\pi i/q})$.  We have that  $\mathrm{Gal}(L/K)\cong (\mathbb{Z}/q\mathbb{Z})^{\times}$, and $(\frac{L/K}{p})=\{a\}$ if and only if $p\equiv a \pmod{q}$.  Since $|(\mathbb{Z}/q\mathbb{Z})^{\times}|$ is given by Euler's totient function $\varphi(q):=\prod_{p|q}(1-p^{-1})$, we have that if $\mathcal{A}=\{\textup{$p$ prime}: p\equiv a \pmod{q}\}$, then 
\[
d(\mathcal{A}_{b,S})= \frac{1}{\varphi(q)} \log_{b}(1+S^{-1}).
\]
\end{ex}

\normalsize

\begin{ex}

The following table gives the numerics for the value of the logarithmic density of primes $p \equiv 1 \pmod{5}$ where $p$'s base-$10$ expansion begins with digit $1$. Our expected value, found in the rightmost column, is due to Example 1.3.
\vspace{0.1in}

\begin{center}
\begin{tabular}{|c|c|c|c|c|c||c|}
  \hline
   $x$  & $10^5$ & $5\times 10^5$ & $10\times 10^5$ & $15\times 10^5$ & $2\times 10^6$ & Expected  \\
   \hline
  $d=1$ & $0.0683\dots$ & $0.0705\dots $ & $0.0691\dots $ & $0.0711\dots $ & $0.0724\dots$ & $0.0753\dots$ \\ \hline

\end{tabular}
\end{center}
\vspace{0.1in}
Note that the rate of convergence for this sum generally is extremely slow. The above result is an outlier as it approaches its expected value at a relatively fast rate. Below is a table giving the logarithmic density of primes $p$ whose base-10 expansion begins with the digit $d$. The expected value, found in the rightmost column, is due to Whitney's result. Note in particular that in the case of $d=2$, the value $\frac{1}{2}$ skews our sum significantly. 
\vspace{0.1in}
\begin{center}
        \centering
        \begin{tabular}{|c|c|c|c|c|c||c|}
        \hline
             $x$ & $2\times 10^5$ & $4\times 10^5$ & $6\times 10^5$ & $8\times 10^5$ & $10^6$ & Expected \\
             \hline
$d=1$ & $0.2598\dots$ & $0.2542\dots$ & $0.2511\dots$ & $0.2491\dots$ & $0.2476\dots$ & $0.3010\dots$ \\
\hline
$d=2$ & $0.2931\dots$ & $0.2995\dots$ & $0.2959\dots$ & $0.2935\dots$ & $0.2917\dots$ & $0.1761 \dots$ \\
\hline
$d=3$ & $0.2001\dots$ & $0.2046\dots$ & $0.2022\dots$ & $0.2005\dots$ & $0.1992\dots$ & $0.1249 \dots$ \\
\hline
$d=4$ & $0.0616\dots$ & $0.0603\dots$ & $0.0662\dots$ & $0.0656\dots$ & $0.0652\dots$ & $0.0969 \dots$ \\
\hline
$d=5$ & $0.1194\dots$ & $0.1168\dots$ & $0.1207\dots$ & $0.1197\dots$ & $0.1190\dots$ & $0.0791 \dots$ \\
\hline
$d=6$ & $0.0351\dots$ & $0.0343\dots$ & $0.0339\dots$ & $0.0380\dots$ & $0.0377\dots$ & $0.0669 \dots$ \\
\hline
$d=7$ & $0.0912\dots$ & $0.0893\dots$ & $0.0882\dots$ & $0.0913\dots$ & $0.0907\dots$ & $0.0579 \dots$ \\
\hline
$d=8$ & $0.0256\dots$ & $0.0251\dots$ & $0.0248\dots$ & $0.0246\dots$ & $0.0277\dots$ & $0.0511 \dots$ \\
\hline
$d=9$ & $0.0184\dots$ & $0.0180\dots$ & $0.0178\dots$ & $0.0176\dots$ & $0.0204\dots$ & $0.0457 \dots$ \\
\hline
        \end{tabular}

\end{center}

\end{ex}

\newpage
\begin{ex}
Let $f(x)\in \mathbb{Z}[x]$ be an irreducible polynomial with degree $n$ and Galois group $G$. If $p$ is prime and $f$ is irreducible modulo $p$, then  Dedekind's theorem implies that there is a Frobenius element corresponding to $f$ that is an $n$-cycle.  By the Chebotarev density theorem, the count of such primes $p\leq x$ is $$\frac{1}{|G|}\Li(x)+o\Big(\frac{x}{(\log x)^2} \Big).$$ Therefore, if $\mathcal{A}$ is the set of primes $p$ where $f \pmod{p}$ is irreducible, then by Corollary \cref{cor:1} (with $K=\mathbb{Q}$ and $L$ the splitting field of $f$), we have
\[d(\mathcal{A}_{b,S})= \frac{1}{|G|} \log_{b}( 1 + S^{-1} ). \]
\end{ex}

Finally, we include an example with a positive-density subset of the primes that is provably not defined by Chebotarev conditions but still  satisfies \eqref{1.1}.

\begin{ex}
Let $p_m$ denote the $m$-th prime ordered by absolute value.  Let $r$ and $t$ be integers, $K=\mathbb{Q}$, and $\mathcal{A}$ be the set of primes $p_m$ such that $m \equiv r \pmod{t}$.  Since the count of such primes up to $x$ is $\frac{1}{t}\pi(x)+O(1)$, one verifies \eqref{1.1} by partial summation with $\delta=\frac{1}{t}$.  It follows that
\[d(\mathcal{A}_{b,S})= \frac{1}{t} \log_{b}( 1 + S^{-1}). \]
It was proved in \cite{KO} that $\mathcal{A}$ is not a union of sets of primes determined by Chebotarev conditions.
\end{ex}

\section{Proof of Theorem 1.1}

Let $K$ be a number field, $\mathcal{P}$ be the set of prime ideals of $K$, and $\mathcal{A}$ be a subset of $\mathcal{P}$.  We assume that there exist constants $\delta\geq 0$ and $C\in\mathbb{R}$ such that (\ref{1.1}) holds.  Let $m\geq 1$ and $b\geq 2$ be integers, let $a_1,\ldots,a_m$ be digits in base-$b$, and let $S$ be the string of digits $a_1 a_2 \ldots a_m$.  Let $\mathcal{A}_{b,S}$ be the prime ideals of $\mathcal{A}$ whose norms have base-$b$ expansion starting with $S$.  We prove our result by showing that  $\overline{d(\mathcal{A}_{b,S})}$ and $\underline{d(\mathcal{A}_{b,S})}$ both equal  $\delta \log_{b}( 1 + S^{-1})$.

\begin{proof}
Let $n\geq 1$ and $t\geq 0$ be integers such that $t\leq n$.  By \eqref{1.1}, we know that
$$ \sum_{\substack{Sb^t \leq \N\mathfrak{p} \leq (S+1)b^t \\ \mathfrak{p} \in \mathcal{A}}} \frac{1}{\N\mathfrak{p}} = \begin{cases} 
          C' & t=0 \\
          \delta \log(\frac{\log((S+1)b^t)}{\log(Sb^t)})+o( \frac{1}{t} ) & t \geq 1.
       \end{cases} $$
Here, $C'$ is a positive constant depending on $\mathcal{A}$. Once we sum the above display over $0\leq t\leq n$, it follows that
\begin{align}
    \sum_{\substack{ \N\mathfrak{p} \leq (S+1)b^n \\ \mathfrak{p} \in \mathcal{A}_{b,S}}} \frac{1}{\N\mathfrak{p}} &= \sum_{t=0}^{n}
    \delta \log\Big(\frac{\log((S+1)b^t)}{\log(Sb^t)}\Big)+o\Big(\sum_{t=1}^{n} \frac{1}{t} \Big) \nonumber
    \\ &= \delta \log \Big( \prod_{t=0}^{n}
     \frac{\log_{b}(S+1)+t}{\log_{b}S+t}\Big)+o(\log n).
\end{align}
Notice that
$$\underline{d(\mathcal{A}_{b,S})} \geq \lim_{n \rightarrow \infty} \frac{1}{\log\log(Sb^n)}\sum_{\substack{ \N\mathfrak{p} \leq Sb^{n-m} \\ \mathfrak{p} \in \mathcal{A}_{b,S}}} \frac{1}{\N\mathfrak{p}}  \geq \lim_{n \rightarrow \infty}  \frac{1}{\log\log(Sb^n)} \sum_{\substack{ \N\mathfrak{p} \leq (S+1)b^{n-m-1} \\ \mathfrak{p} \in \mathcal{A}_{b,S}}} \frac{1}{\N\mathfrak{p}} . $$
Combining this inequality with (2), we see that
\begin{equation}
    \underline{d(\mathcal{A}_{b,S})} \geq \lim_{n \rightarrow \infty} \frac{\delta \log \Big( \prod_{t=0}^{n-m-1} \frac{\log_{b}(S+1)+t}{\log_{b}S+t}\Big)+o(\log n)}{\log(\log S+n) +\log\log b}.
\end{equation}
We now recall an identity of Euler for $\Gamma(x)$.  If $x>0$, then
$$\Gamma(x)=\lim_{n \rightarrow \infty} \frac{n! n^x}{\prod_{j=0}^n(x+j)}.$$
Using this identity we find that the product in (2) is asymptotic to a ratio of gamma functions multiplied a power of $n-m-1$; that is, as $n\to\infty$, we have that
\begin{align}
    \prod_{t=0}^{n-m-1} \frac{\log_{b}(S+1)+t}{\log_{b}S+t} &=  \frac{(n-m-1)!(n-m-1)^{\log_b S}}{\prod_{t=0}^{n-m-1}(\log_{b}S+t)}  \frac{\prod_{t=0}^{n-m-1}(\log_{b}(S+1)+t)}{(n-m-1)! (n-m-1)^{\log_b(S+1)}}
    \notag \\
    &\sim \frac{\Gamma(\log_{b} S)}{\Gamma(\log_{b} (S+1))} (n-m-1)^{\log_{b}(S+1)-\log_{b}S}.
\end{align}
Therefore, by (4), we can rewrite (3) as
$$ \underline{d(\mathcal{A}_{b,S})} \geq \lim_{n \rightarrow \infty} \frac{\delta \log \Big( \frac{\Gamma(\log S)}{\Gamma(\log(S+1))} \Big)+\delta\log_b(1+\frac{1}{S})\log(n-m-1)+o(\log n)}{\log(\log S+n) +\log\log b} .$$
By evaluating this limit, we obtain the following lower bound
\begin{equation}
   \underline{d(\mathcal{A}_{b,S})}\geq \delta\log_b\Big(1+\frac{1}{S}\Big).
\end{equation}
We now produce an upper bound for $\overline{d(\mathcal{A}_{b,S})}$. Notice that
$$\overline{d(\mathcal{A}_{b,S})} \leq \lim_{n \rightarrow \infty} \frac{1}{\log \log(Sb^n)} \sum_{\substack{ \N\mathfrak{p} \leq (S+1)b^{n-m} \\ \mathfrak{p} \in \mathcal{A}_{b,S}}} \frac{1}{\N\mathfrak{p}} . $$
By (2),

$$ \overline{d(\mathcal{A}_{b,S})} \leq \lim_{n \rightarrow \infty} \frac{\delta \log \Big( \prod_{t=0}^{n-m} \frac{\log_{b}(S+1)+t}{\log_{b}S+t}\Big)+o(\log n)}{\log(\log S+n) +\log\log b}. $$
By an almost identical argument as above, we observe
$$ \overline{d(\mathcal{A}_{b,S})} \leq \lim_{n \rightarrow \infty} \frac{\delta \log \Big( \frac{\Gamma(\log S)}{\Gamma(\log(S+1))} \Big)+\delta\log_{b}(1+\frac{1}{S})\log(n-m)+o(\log n)}{\log(\log S+n) +\log\log b}. $$
By taking the above limit, we then obtain the following upper bound 
\begin{equation}
\overline{d(\mathcal{A}_{b,S})} \leq \delta \log_b\Big( 1+ \frac{1}{S} \Big).
\end{equation}
Putting together (5) and (6), we deduce the following chain of inequalities 
$$ \delta \log_{b} \Big( 1 + \frac{1}{S} \Big) \leq \underline{d(\mathcal{A}_{b,S})} \leq d(\mathcal{A}_{b,S}) \leq \overline{d(\mathcal{A}_{b,S})} \leq \delta \log_{b} \Big( 1 + \frac{1}{S} \Big),$$
as desired.
\end{proof}

\section*{Acknowledgements}
The author would like to thank Ken Ono for running the
University of Virginia REU in Number Theory and making valuable suggestions to this paper. The author would also like to thank Jesse Thorner and Wei-Lun Tsai for their guidance in research and writing. The author was a participant in the 2022 UVA REU in Number Theory. The author is grateful for the support of grants from the National Science Foundation
(DMS-2002265, DMS-2055118, DMS-2147273), the National Security Agency (H98230-22-1-0020), and the Templeton World Charity Foundation.

\bibliographystyle{alpha}
\bibliography{1}

\end{document}